\newtheorem{Theorem}{Theorem}[section]
\newtheorem{Observation}[Theorem]{Observation}
 { \theoremstyle{definition}
\newtheorem{Remark}[Theorem]{Remark} }
\begin{document}

\allowdisplaybreaks

\newcommand{\arXivNumber}{1612.03674}

\renewcommand{\PaperNumber}{029}

\FirstPageHeading

\ShortArticleName{Isomonodromy for the Degenerate Fifth Painlev\'e Equation}

\ArticleName{Isomonodromy for the Degenerate\\ Fifth Painlev\'e Equation}

\Author{Primitivo B.~ACOSTA-HUM\'ANEZ~$^\dag$, Marius VAN DER PUT~$^\ddag$ and Jaap TOP~$^\ddag$}

\AuthorNameForHeading{P.B.~Acosta-Hum\'anez, M.~van der Put and J.~Top}

\Address{$^\dag$~Universidad Sim\'{o}n Bol\'{\i}var, Barranquilla, Colombia}
\EmailD{\href{mailto:primitivo.acosta@unisimonbolivar.edu.co}{primitivo.acosta@unisimonbolivar.edu.co}}

\Address{$\ddag$~University of Groningen, Groningen, The Netherlands}
\EmailD{\href{mailto:m.van.der.put@rug.nl}{m.van.der.put@rug.nl}, \href{mailto:j.top@rug.nl}{j.top@rug.nl}}

\ArticleDates{Received December 12, 2016, in f\/inal form May 01, 2017; Published online May 09, 2017}

\Abstract{This is a sequel to papers by the last two authors making the Riemann--Hilbert correspondence and isomonodromy explicit. For the degenerate f\/ifth Painlev\'e equation, the moduli spaces for connections and for monodromy are explicitly computed. It is proven that the extended Riemann--Hilbert morphism is an isomorphism. As a consequence these equations have the Painlev\'e property and the Okamoto--Painlev\'e space is identif\/ied with a~moduli space of connections. Using MAPLE computations, one obtains formulas for the degenerate f\/ifth Painlev\'e equation, for the B\"acklund transformations.}

\Keywords{moduli space for linear connections; irregular singularities; Stokes matrices; mono\-dromy spaces; isomonodromic deformations; Painlev\'e equations}

\Classification{33E17; 14D20; 14D22; 34M55}

\section{Introduction}
In the series of papers \cite{vdP2,vdP1, vdP-Sa,vdP-T1, vdP-T2} on isomonodromy families for Painlev\'e equations the cases ${\rm P}_{\rm I}$--${\rm P}_{\rm IV}$ are treated. Here we apply our methods to ${\rm degP}_{\rm V}$, the degenerate f\/ifth Painlev\'e equation. We hope to extend this in a~later paper to ${\rm P}_{\rm V}$. We now describe the method of the Riemann--Hilbert correspondence for ${\rm degP}_{\rm V}$, following closely~\cite{vdP-T1,vdP-T2}.

The degenerate f\/ifth Painlev\'e equation ${\rm degP}_{\rm V}(\theta_0,\theta_1)$ depends on two parameters $\theta_0$, $\theta_1$ and the corresponding isomonodromy family is given, to begin with, by the {\it set} ${\bf S}(\theta_0,\theta_1)$ of dif\/ferential modules $M$ over $\mathbb{C}(z)$ def\/ined by: $\dim M=2$; the exterior product $\Lambda ^2M$ is trivial; the points~$0$,~$1$ are regular singular with local exponents $\pm \frac{\theta_0}{2}$ and $\pm \frac{\theta_1}{2}$. Finally $z=\infty$ is irregular singular with Katz invariant $\frac{1}{2}$, which means that the `generalized eigenvalues' at $z=\infty$ are $\pm t\cdot z^{1/2}$ with $t\in \mathbb{C}^*$.

This set is made into an algebraic variety $\mathcal{M}(\theta_0,\theta_1)$ which is a~moduli space for connections on a f\/ixed bundle on $\mathbb{P}^1$ of rank two and degree $-1$ with prescribed data (see Section~\ref{11}) at $z=0,1,\infty$. If $\theta_0\neq 0$, $\theta_1\neq 0$, then $\mathcal{M}(\theta_0,\theta_1)$ is a f\/ine modul space and is smooth. For $\theta_0=0$ and/or $\theta_1=0$, the moduli problem will be changed by adding ``an invariant line''. This is called a~parabolic structure in the literature, see \cite{Ina06, IISA,IIS1,IIS2}. It leads to a~f\/ine moduli space $\mathcal{M}^+(\theta_0, \theta_1)$ which is a~desingularisation of $\mathcal{M}(\theta_0,\theta_1)$.

The analytic data attached to modules in ${\bf S}(\theta_0,\theta_1)$ are two monodromy matrices and one Stokes matrix. They produce a `monodromy space' $\mathcal{R}(s_0,s_1)$ depending on $s_0=e^{\pi i \theta_0}+e^{-\pi i \theta_0}$ and $s_1=e^{\pi i \theta_1}+e^{-\pi i \theta_1}$. For $s_0\neq \pm 2$ and $s_1\neq \pm 2$ the monodromy space is a f\/ine moduli space and is smooth. It is in fact a smooth af\/f\/ine cubic surface with three lines at inf\/inity. For the other cases one changes the moduli problem by adding an `invariant line'. The f\/ine moduli space $\mathcal{R}^+(s_0,s_1)$ for these new data is a minimal resolution of $\mathcal{R}(s_0,s_1)$.

The Stokes matrix attached to a module in $ {\bf S}(\theta_0, \theta_1)$ depends on the choice of a~summation direction at $z=\infty$. This direction has to be dif\/ferent from the singular direction at $z=\infty$ which turns around $\infty$ for~$t$ varying in $T:=\mathbb{C}^*$. Therefore there is a locally, with respect to $t\in \mathbb{C}^*$, def\/ined analytic Riemann--Hilbert map ${\rm RH}\colon \mathcal{M}^+(\theta_0,\theta_1) \rightarrow \mathcal{R}^+(s_0,s_1)$ with $s_0=e^{\pi i \theta_0}+e^{-\pi i \theta_0}$ and $s_1=e^{\pi i \theta_1}+e^{-\pi i \theta_1}$. After replacing $T$ by its universal covering $\tilde{T}=\mathbb{C}$ the morphism ${\rm RH}$ is well def\/ined. {\it The main result is that the extended Riemann--Hilbert map
\begin{gather*}
{\rm RH}^+\colon \ \mathcal{M}^+(\theta_0,\theta_1)\times _T\tilde{T}\rightarrow \mathcal{R}^+(s_0,s_1)\times \tilde{T}
\end{gather*}
is an analytic isomorphism}. It follows from this that ${\rm degP}_{\rm V}$ has the Painlev\'e property and that $\mathcal{M}^+(\theta_0,\theta_1)$ coincides with the Okamoto--Painlev\'e space (see~\cite{S-Ta,STT02,STe02} for this subject). The explicit computations of the spaces $\mathcal{M}^+(\theta_0,\theta_1)$ and $\mathcal{R}^+(s_0,s_1)$ lead, using MAPLE, to formulas for ${\rm degP}_{\rm V}$ and for the B\"acklund transformations.

We note that our def\/inition of ${\rm degP}_{\rm V}$ is not quite the same as the `classical' degenerate f\/ifth Painlev\'e equation. The close relation between the two is given in Section~\ref{section3}. We were informed by Y.~Ohyama about the `equivalence' between ${\rm degP}_{\rm V}$ and ${\rm P}_{\rm III}(D_6)$ found by V.I.~Gromak~\cite{Gr1}. A~Hamiltonian for ${\rm P}_{\rm III}(D_6)$ is
\begin{gather*} \frac{1}{t}\big(q^2 p^2-\big(q^2-(\alpha+\beta) q-t\big)p - \alpha q\big).\end{gather*}
The degrees in $p$ and $q$ are $\leq 2$. Therefore the Hamilton equations allow to eliminate~$p$ (in terms of $q$, $q'$) and also to eliminate~$q$ (in
terms of~$p$,~$p'$). In the f\/irst case one obtains ${\rm P}_{\rm III}(D_6)$ and in the second case the classical degenerate f\/ifth Painlev\'e equation.

This equivalence does not seem to produce a relation between the isomonodromy families for ${\rm degP}_{\rm V}$ and for ${\rm P}_{\rm III}(D_6)$.

Apart from the references given at the beginning of this introduction, several sources discuss geometric aspects of Painlev\'{e} equations. Many of these can be found in the references of our papers \cite{vdP2,vdP1,vdP-Sa,vdP-T1, vdP-T2}. Classical' papers on the subject are \cite{JMU,JM,O1,O2,O3,O4,O5}. Especially relevant for the present text are the paper by Ohyama and Okumura \cite{OO2}, Witte's paper \cite{Wi}. The book \cite{FIKN} by Fokas, Its, Kapaev, and Novokshenov discusses more analytic aspects of the Riemann--Hilbert correspondence, but it does not discuss the degenerate f\/ifth Painlev\'e equation. Finally, we mention the recent paper \cite{CMR} by Chekhov, Mazzocco, and Rubtsov which also provides an interesting geometric appoach and overview.

\section[The moduli space $\mathcal{M}(\theta_0,\theta_1)$ of connections]{The moduli space $\boldsymbol{\mathcal{M}(\theta_0,\theta_1)}$ of connections}\label{section2}
\subsection[Def\/inition of ${\bf S}(\theta_0,\theta_1)$]{Def\/inition of $\boldsymbol{{\bf S}(\theta_0,\theta_1)}$}\label{11}
An element of the set ${\bf S}(\theta_0,\theta_1)$ is (the isomorphy class of) a tuple $(M,\theta_0,\theta_1, t)$ where $M$ is a~dif\/ferential module over $\mathbb{C}(z)$ such that $\dim M=2$; $\det M:=\Lambda ^2M$ is the trivial module; $M$~has three singular points $0$, $1$, $\infty$ and their Katz invariants are $r(0)=0$, $r(1)=0$, $r(\infty)=1/2$. Further, the singularities are represented:
\begin{itemize}\itemsep=0pt
\item at $z=0$ by $\frac{d}{dz}+\frac{1}{z}\left(\begin{smallmatrix} \omega _0& * \\ 0& -\omega _0 \end{smallmatrix}\right)$ with
$\omega _0=\frac{\theta_0}{2}$ and $\theta _0, *\in \mathbb{C}$;
\item at $z=1$ by $\frac{d}{dz}+\frac{1}{z-1}\left(\begin{smallmatrix} \omega _1& * \\ 0& -\omega _1 \end{smallmatrix}\right)$ with
$\omega _1=\frac{\theta_1}{2}$ and $\theta_1, *\in \mathbb{C}$;
\item at $z=\infty$ by $z\frac{d}{dz}+\left(\begin{smallmatrix} \omega _\infty & 0 \\ 0 & -\omega _\infty \end{smallmatrix}\right)$ with
$\omega _\infty=tz^{1/2}$ and $t\in \mathbb{C}^*$.
\end{itemize}

Let $\partial$ denote the given dif\/ferential operator on $M$, corresponding with the derivation $\frac{d}{dz}$ on~$\mathbb{C}(z)$, i.e., $\partial(fm) =\frac{df}{dz}m+f\partial (m)$ for $f\in \mathbb{C}(z)$ and $m\in M$.

The condition at $z=0$ means that $\mathbb{C}((z))\otimes M$ has a~$\mathbb{C}[[z]]$-lattice with a~basis such that the matrix of $z\partial$ is $\left(\begin{smallmatrix} \omega_0& *\\ 0 & -\omega_0 \end{smallmatrix}\right)$. For $\theta_0\neq 0$ one can always take $*=0$. For $\theta_0=0$, any~$*$
is admissible. At $z=1$ the condition is similar for the matrix of $(z-1)\partial$.

The condition at $z=\infty$ means that $\mathbb{C}((z^{-1/2}))\otimes M$ has a basis for which $z\partial$ has the matrix $\left(\begin{smallmatrix} \omega _\infty & 0 \\ 0 & -\omega_\infty \end{smallmatrix}\right)$. In the construction of the moduli space $\mathcal{M}(\theta_0,\theta _1)$ this matrix will be changed in a~matrix def\/ined over $\mathbb{C}((z^{-1}))$.

There is an obvious bijection ${\bf S}(\theta_0,\theta_1)\rightarrow {\bf S}(\theta_0+a,\theta _1+b)$ for any $a,b\in 2\mathbb{Z}$, obtained by changing the lattices for $z=0$ and $z=1$.

\subsection{Choosing connections}
We represent $M\in {\bf S}(\theta_0,\theta_1)$ by a connection $\nabla \colon \mathcal{V}\rightarrow \Omega ([0]+[1]+2[\infty])\otimes\mathcal{V}$, where $\mathcal{V}$ is a vector bundle on $\mathbb{P}^1$ of rank two. Thus the generic f\/iber of $\nabla$ is the map $M\rightarrow M\otimes _{\mathbb{C}(z)}\mathbb{C}(z)dz$, given by $m\mapsto \partial(m)dz$.

The vector bundle $\mathcal{V}$ is determined by the choice of the lattices at every point of $\mathbb{P}^1$. For the points $z=a\neq 0,1,\infty$ the lattice in $\mathbb{C}((z-a))\otimes M$ is the $\mathbb{C}[[z-a]]$-module generated by $\ker (\partial ,\mathbb{C}((z-a))\otimes M)$. For the points $z=0$ and $z=1$ we choose the lattices corresponding to the given $\theta_0$, $\theta_1$. In other words, the characteristic polynomials of the matrices of $z\partial$ and $(z-1)\partial$ are prescribed by $X^2-\frac{\theta _0^2}{4}$ and $X^2-\frac{\theta _1^2}{4}$.

At $z=\infty$ the situation is more complicated. An invariant lattice at $\infty$ is represented over the dif\/ferential f\/ield $\mathbb{C}((z^{-1/2}))$ by
$z\frac{d}{dz}+\left(\begin{smallmatrix} tz^{1/2}&0\\ 0&-tz^{1/2} \end{smallmatrix}\right). $ We need an expression over the f\/ield $\mathbb{C}((z^{-1}))$ or ``invariant lattices'' for $\mathbb{C}((z^{-1}))\otimes M$. For Katz invariant $1/2$ or~$1$ at~$\infty$, a~lattice~$\Lambda$ is called invariant if $z\partial \Lambda \subset \Lambda$.

We adopt here the terminology \cite{vdP-Si} for the classif\/ication of dif\/ferential modules over $\mathbb{C}((z^{-1}))$.

The formal solution space $V$ at $z=\infty$ is described as $V=V_q\oplus V_{-q}=\mathbb{C}e_1\oplus \mathbb{C}e_2$ with $q=tz^{1/2}$ and the formal monodromy $\gamma $ is given by $\gamma (e_1)=e_2$ and $\gamma (e_2)=-e_1$. Indeed, the determinant of $M$ is trivial and hence $\det(\gamma)=1$. The (formal local) dif\/ferential module $\mathbb{C}((z^{-1}))\otimes M$ and its invariant lattices are now obtained by considering the invariants of~$\mathcal{U}\otimes_{\mathbb{C}} V$ (here $\mathcal{U}$ is the universal Picard--Vessiot ring for $\mathbb{C}((z^{-1}))$) under the actions of the dif\/ferential automorphisms of $\mathcal{U}$ over $\mathbb{C}((z^{-1}))$. A computation yields invariant lattices~$\Lambda_1$ and~$\Lambda _2$ represented by
\begin{gather*} z\frac{d}{dz}+\left(\begin{matrix} -\frac{1}{4}&tz\\
t&\frac{1}{4} \end{matrix}\right) \qquad \text{and}\qquad z\frac{d}{dz}+\left(\begin{matrix} -\frac{3}{4}&tz\\
t&-\frac{1}{4} \end{matrix}\right).
\end{gather*}
All lattices are given by $z^n\Lambda _1$ and $z^n\Lambda _2$ with $n\in \mathbb{Z}$.

\begin{Remark} By conjugation with the constant matrix $\left(\begin{smallmatrix} t & 0\\ 0 & 1 \end{smallmatrix}\right)$ one can
change these formulas into
\begin{gather*} z\frac{d}{dz}+\left(\begin{matrix} -\frac{1}{4}&t^2z\\
1&\frac{1}{4} \end{matrix}\right) \qquad \text{and}\qquad
z\frac{d}{dz}+\left(\begin{matrix} -\frac{3}{4}&t^2z\\
1&-\frac{1}{4} \end{matrix}\right).
\end{gather*}
\end{Remark}

We want that $\det \mathcal{V}:=\Lambda ^2\mathcal{V}$ has degree~$-1$. The reason is that, in general $\mathcal{V}=O(d_1)\oplus O(d_2)$ with $d_1\leq d_2$. The dif\/ferential module $\mathbb{C}((z^{-1}))\otimes M$ is irreducible because of the ramif\/ication at $z=\infty$. Hence~$M$ and also the connection on $\mathcal{V}$ is irreducible. This implies (in this special case) that $d_2-d_1\leq 2$. If we make the natural assumption that $\deg \mathcal{V}=0$, then there are two possibilities for~$\mathcal{V}$, namely $O\oplus O$ and $O(-1)\oplus O(+1)$. The construction of a~family of connection would involve a construction of a family of vector bundles of rank two and degree~0 on~$\mathbb{P}^1$. However, we will avoid this by imposing $\deg \mathcal{V}=-1$. Then there is only one possibility for $\mathcal{V}$, namely $O\oplus O(-1)$.

Finally, the degree of $\mathcal{V}$ is $-1$ precisely for the choice of the lattice $\Lambda _2$ corresponding to $z\frac{d}{dz}+\left(\begin{smallmatrix} -\frac{3}{4}&tz\\ t&-\frac{1}{4} \end{smallmatrix}\right)$.

\subsection{Computation of the connection}\label{section2.3}
We identify $\mathcal{V}$ with $Oe_1\oplus O(-[\infty ])e_2$ (a subvector bundle of the free (i.e., trivial) vector bundle $Oe_1\oplus Oe_2$). Let $D=\nabla _{\frac{d}{dz}}$ and $\tilde{D}:=z(z-1)D$. Then $\tilde{D}e_1 \in \langle 1,z,z^2\rangle e_1+\langle 1,z\rangle e_2$; $\tilde{D}e_2 \in \langle 1,z,z^2,z^3\rangle e_1+\langle 1,z,z^2\rangle e_2$. Here $\langle *,* ,* \rangle$ denotes the $\mathbb{C}$-vector spaces generated by these $*$'s. The vector bundle $\mathcal{V}$ has an automorphism group $G$ and the moduli space $\mathcal{M}(\theta_0,\theta _1)$ (note that we f\/ix the parameters $\theta_0$ and $\theta _1$ in this construction), that we are constructing, is obtained by dividing the space of all matrices for the connections by the group $G$. This group consists of the elements $e_1\mapsto \lambda e_1$, $e_2\mapsto \mu e_2+(x_0+x_1z)e_1$ (with $\lambda , \mu \in \mathbb{C}^*$, $x_0,x_1\in \mathbb{C}$). Further the multiples of the identity act trivially and we have only to consider the automorphisms $e_1\mapsto \lambda e_1$, $e_2 \mapsto e_2+(x_0+x_1z)e_1$.

Since the connection is irreducible we have that $\tilde{D}e_1=\langle 1,z,z^2\rangle e_1+(b_1z+b_0)e_2$ with $(b_1z+b_0)\neq 0$. We consider now two af\/f\/ine parts: $b_1\neq 0$ and $b_0\neq 0$. The two af\/f\/ine parts are divided out by the action of~$G$. These quotients are geometric quotients and they are obtained by normalization of some of the entries of the matrices. One obtains two af\/f\/ine varieties $\mathcal{M}_1(\theta _0,\theta _1)$ and
$\mathcal{M}_2(\theta _0,\theta_1)$ which are glued to the moduli space $\mathcal{M}(\theta_0,\theta_1)$.

\subsubsection[The f\/irst af\/f\/ine part $\mathcal{M}_1(\theta_0, \theta_1)$]{The f\/irst af\/f\/ine part $\boldsymbol{\mathcal{M}_1(\theta_0, \theta_1)}$}\label{section2.3.1}

This is obtained by dividing the open subspace $b_1\neq 0$ by the action of the group~$G$. First one normalizes $b_1$ by the automorphism $e_1\mapsto \lambda e_1$ to $b_1=1$. Using $e_2\mapsto e_2+(a_0+a_1z)e_1$ one normalizes further to $\tilde{D}e_1=a_0e_1+(z+b_0)e_2$ and $\tilde{D}e_2= (c_0+c_1z+c_2z^2+c_3z^3)e_1+(d_0+d_1z+d_2z^2)e_2$. Now we have to compute the equations between the variables due to the prescription of the three invariant lattices.

For $z=0$. Now $\frac{1}{z-1}\tilde{D}$ has modulo $z=0$ the eigenvalues $\pm \frac{\theta_0}{2}$. This yields $a_0+d_0=0$ and $a_0d_0-b_0c_0=-\frac{\theta_0^2}{4}$.

For $z=1$. Now $\frac{1}{z}\tilde{D}$ has modulo $z=1$ the eigenvalues $\pm \frac{\theta_1}{2}$. This yields $ a_0+d_0+d_1+d_2=0$ and $a_0(d_0+d_1+d_2)-(1+b_0)(c_0+c_1+c_2+c_3)=-\frac{\theta_1^2}{4}.$

For $z=\infty$. The local basis of $\mathcal{V}$ at $z=\infty$ is $e_1$, $z^{-1}e_2$. The operator $\frac{1}{z-1}\tilde{D}$ w.r.t.\ this local basis is $z\frac{d}{dz}+\left(\begin{smallmatrix} \frac{a_0}{z-1}&\frac{c_0+c_1z+c_2z^2+c_3z^3}{z(z-1)}\\ z\frac{z+b_0}{z-1} & -1+\frac{d_0+d_1z+d_2z^2}{z-1} \end{smallmatrix}\right)$. This should be equivalent to $z\frac{d}{dz}+\left(\begin{smallmatrix}\frac{-3}{4}&tz\\ t&\frac{-1}{4}\end{smallmatrix}\right)$. The characteristic polynomials of the two matrices should be equal modulo $\mathbb{C}[[z^{-1}]]$.

\begin{Remark}
A more precise computation is needed to verify the correctness of the last statement. On another basis of the lattice, the operator $z\frac{d}{dz}+\left(\begin{smallmatrix}\frac{-3}{4}&tz\\ t&\frac{-1}{4}\end{smallmatrix}\right)$ reads
\begin{gather*}
\big(1+A_1z^{-1}+\cdots \big)^{-1}A_0^{-1}\left(z\frac{d}{dz}+\left(\begin{matrix}\frac{-3}{4}&tz\\ t&\frac{-1}{4}\end{matrix}\right)\right)A_0\big(1+A_1z^{-1}+\cdots\big),
\end{gather*} leading to
\begin{gather*}
z\frac{d}{dz}+A_0^{-1}\left(\left(\begin{matrix}\frac{-3}{4}&tz\\ t& \frac{-1}{4}\end{matrix}\right) + t\left( \begin{matrix} x_3& x_4-x_1\\ 0 & -x_3\end{matrix}\right) \right)A_0,
\end{gather*}
where $A_0\in {\rm GL}_2(\mathbb{C})$, $x_1,x_3,x_4\in \mathbb{C}$. Thus $d_2=0$, $d_1=0$, $c_3=0$ and $c_2=t^2$.
\end{Remark}

{\it Finally} the connection as matrix dif\/ferential operator w.r.t.\ $e_1$, $e_2$ reads
\begin{gather*}
\frac{d}{dz}+\frac{1}{z(z-1)}\left(\begin{matrix}a_0&c_0+c_1z+t^2z^2\\ z+b_0&-a_0\end{matrix}\right)
\end{gather*}
and there are two equations
\begin{gather*}
a_0^2+b_0c_0=\frac{\theta_0^2}{4},\qquad a_0^2+(1+b_0)\big(c_0+c_1+t^2\big)=\frac{\theta_1^2}{4}.\end{gather*}
The last equation can be changed into
\begin{gather*}
c_0=-\frac{\theta_0^2}{4}+\frac{\theta_1^2}{4}-c_1-t^2-b_0c_1-b_0t^2\end{gather*}
and this is used to eliminate $c_0$. This leaves four variables $a_0$, $b_0$, $c_1$, $t$ and one equation
\begin{gather*}
a_0^2+b_0\left(-\frac{\theta_0^2}{4}+\frac{\theta_1^2}{4}-c_1-t^2-b_0c_1-b_0t^2\right)- \frac{\theta _0^2}{4}=0.
\end{gather*}
{\it The above describes the space $\mathcal{M}_1(\theta_0,\theta_1)$}. We note that this space depends only on $\theta_0^2$ and $\theta_1^2$. The space $\mathcal{M}_1(\theta_0,\theta_1)$ is smooth if $\theta_0\neq 0$ and $\theta_1\neq 0$. For $\theta_0=0$, $\theta_1\neq 0$, the {\it singular locus} is given by $a_0=0$, $b_0=0$, $c_1=-t^2+\frac{\theta_1^2}{4}$. For $\theta_0\neq 0$, $\theta_1=0$, the {\it singular locus} is given by $a_0=0$, $b_0=-1$, $c_1=-t^2+\frac{\theta_0^2}{4}$. For $\theta_0=\theta_1=0$, the singular locus is given by $a_0=0$, $b_0(b_0+1)=0$, $c_1=-t^2$.

Moreover $\mathcal{M}_1(\theta_0,\theta_1)$ seen as a two dimensional space over the f\/ield $\mathbb{C}(t)$ has the same singular locus but now seen as a set of at most two points.

\begin{Observation}\label{Observation1.1} For every $a\in \mathbb{C}^*$ the closed subspace of $\mathcal{M}_1(\theta_0,\theta_1)$, defined by $t=a$, is simply connected.
\end{Observation}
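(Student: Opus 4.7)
Let me fix $a\in \mathbb{C}^*$, write $\alpha=\theta_0^2/4$ and $\beta=\theta_1^2/4$, and denote by $X_a\subset \mathbb{A}^3_{(a_0,b_0,c_1)}$ the fibre $\mathcal{M}_1(\theta_0,\theta_1)\cap\{t=a\}$, cut out by
\[
F:=a_0^2-\alpha+b_0(\beta-\alpha)-b_0(1+b_0)(c_1+a^2)=0.
\]
The plan is to exploit that $F$ is linear in $c_1$ with leading coefficient $-b_0(1+b_0)$, and to compute $\pi_1(X_a)$ by van Kampen's theorem applied to the projection $\pi\colon X_a\to\mathbb{A}^1_{b_0}$. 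Over $U:=\mathbb{A}^1\setminus\{0,-1\}$ the coordinate $c_1$ is uniquely determined by $(a_0,b_0)$, so $\pi^{-1}(U)\cong\mathbb{A}^1\times U$ and consequently $\pi_1(\pi^{-1}(U))=F_2=\langle \gamma_0,\gamma_{-1}\rangle$, where $\gamma_0$ and $\gamma_{-1}$ are meridians in the $b_0$-plane around $0$ and $-1$. The fibres of $\pi$ over $b_0=0$ and $b_0=-1$ are cut out by $a_0^2=\alpha$ and $a_0^2=\beta$, with $c_1$ free; for generic parameters they form four disjoint affine lines $L_1,\ldots,L_4$ in the smooth locus of $X_a$.

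In the generic case $\theta_0,\theta_1\neq 0$, each $L_i$ admits a tubular neighbourhood $T_i\cong \mathbb{A}^1\times D$ in $X_a$ with $T_i\cap \pi^{-1}(U)\simeq \mathbb{A}^1\times D^*$, so $\pi_1(T_i\cap \pi^{-1}(U))=\mathbb{Z}$ is generated by the meridian of $L_i$. A first-order computation of the tangent space $T_pX_a=\ker dF$ at a point $p\in L_i$ shows that this meridian projects via $\pi$ to a simple loop around the appropriate puncture of $U$, representing $\gamma_0^{\pm 1}$ if $L_i$ lies over $b_0=0$, or $\gamma_{-1}^{\pm 1}$ otherwise. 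Van Kampen's theorem then gives $\pi_1(X_a)=F_2/\langle\langle\gamma_0,\gamma_{-1}\rangle\rangle=1$.

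When $\theta_0=0$ or $\theta_1=0$ the surface $X_a$ acquires an $A_1$-singularity and this analysis breaks: for $\theta_0=0$, say, the meridian of the unique line over $b_0=0$ winds twice in the $b_0$-plane, representing $\gamma_0^{\pm 2}$. I would handle these cases with a coordinate change: for $\theta_0=0$ and $\theta_1\neq 0$ the substitution $v:=(1+b_0)(c_1+a^2)-\beta$, valid on $\{b_0\neq -1\}$, identifies $X_a\cap\{b_0\neq -1\}$ with the complement of the smooth affine curve $C:=\{b_0=-1\}$ in the affine $A_1$-surface $A:=\{a_0^2=b_0v\}\cong \mathbb{C}^2/\langle \pm 1\rangle$. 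An orbifold fundamental group computation---starting from $\pi_1(\mathbb{C}^2\setminus\{x=\pm i\})=F_2$, noting that the involution $(x,y)\mapsto(-x,-y)$ swaps the two meridian generators, and killing the $\mathbb{Z}/2$ contributed by the $\mathbb{R}P^3$ link of the single orbifold point---gives $\pi_1(A\setminus C)=\mathbb{Z}$, generated by a meridian of $C$. The two smooth lines $L_3,L_4$ over $b_0=-1$ are contracted under the map $\phi\colon X_a\to A$, $(a_0,b_0,c_1)\mapsto (a_0,b_0,v)$, to two points of $C$, and their meridians in $X_a$ map under $\phi$ to meridians of $C$ in $A\setminus C$; adding them via van Kampen kills this remaining $\mathbb{Z}$. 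The symmetric case $\theta_1=0$, $\theta_0\neq 0$ and the doubly degenerate case $\theta_0=\theta_1=0$ are handled by analogous coordinate changes.

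The main obstacle is the degenerate case: one has to (i) carry out the orbifold fundamental group computation for $A\setminus C$ correctly, and (ii) verify that the meridians of $L_3,L_4$ are mapped by $\phi$ to generators, rather than non-trivial multiples, of $\pi_1(A\setminus C)=\mathbb{Z}$. The generic case, by contrast, is essentially mechanical once the fibration $\pi$ and its meridians have been identified.
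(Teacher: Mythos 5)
Your argument is correct in substance, but it takes a genuinely different and considerably heavier route than the paper, above all in the degenerate cases. The paper projects $X_a$ to the $(a_0,b_0)$-plane rather than to the $b_0$-line: since $F$ is linear in $c_1$, the fibres of $(a_0,b_0,c_1)\mapsto(a_0,b_0)$ are empty, a point, or a copy of $\mathbb{C}$, and the image $B$ is $\mathbb{C}\times(\mathbb{C}\setminus\{0,-1\})$ together with at most four points $\left(\pm\frac{\theta_0}{2},0\right)$, $\left(\pm\frac{\theta_1}{2},-1\right)$. Each generator of $\pi_1$ of the big stratum is then contracted by the explicit two-parameter homotopy $(s,\lambda)\mapsto\left(\frac{\theta_0}{2},\lambda e^{2\pi i s}\right)$ sweeping through one of the added points; this lifts to $X_a$ because $c_1$ extends continuously along $a_0=\pm\frac{\theta_0}{2}$ across $b_0=0$ (when $\theta_0=0$ the lifted contraction simply passes through the $A_1$ point). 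That single argument covers all $\theta_0$, $\theta_1$ with no case distinction, no order-of-vanishing computation and no auxiliary model. Your van Kampen/meridian version agrees with this in the smooth case, but because a meridian lives in the smooth locus it only detects $\gamma_0^2$ when $\theta_0=0$ --- which is exactly what forces you into the $\mathbb{C}^2/\{\pm 1\}$ orbifold computation. Both of the obstacles you flag do go through: $\pi_1(A\setminus C)\cong\mathbb{Z}$ is correct, and the class of a loop there is its winding number of $b_0+1$, so the meridians of $L_3$, $L_4$ (reduced components of $\{b_0=-1\}$ when $\theta_1\neq 0$) are indeed generators. What your route buys is the extra information that the smooth locus of the degenerate fibre is \emph{not} simply connected (local fundamental group $\mathbb{Z}/2$ at the $A_1$ point), which the paper's argument never sees. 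The one real soft spot is $\theta_0=\theta_1=0$, where two $A_1$ points appear over $b_0=0$ and $b_0=-1$: ``analogous coordinate changes'' is an assertion rather than an argument, and you would need to run the orbifold analysis at both points --- or simply switch to the paper's contraction, which handles that case for free.
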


Indeed, the equation
\begin{gather*}
a_0^2+b_0\left(-\frac{\theta_0^2}{4}+\frac{\theta_1^2}{4}-c_1-a^2-b_0c_1-b_0a^2\right)- \frac{\theta _0^2}{4}=0
\end{gather*}
def\/ines this two-dimensional space. It is mapped to $\mathbb{C}^2$ by $(a_0,b_0,c_1)\mapsto (a_0,b_0)$. The f\/ibre is either empty, or a point, or $\mathbb{C}$. Hence it suf\/f\/ices to show that the image $B\subset \mathbb{C}^2$ is simply connected. Now $B$ is the union of $X:=\mathbb{C}\times (\mathbb{C}\setminus \{0,-1\})$ and the points $(\pm \frac{\theta_0}{2},0)$ and $(\pm \frac{\theta_1}{2},-1)$. The canonical map $\pi_1(X,*)\rightarrow \pi_1(B,*)$ is surjective. Consider one of the two generators, $s\in [0,1]\mapsto \big(\frac{\theta_0}{2},e^{2\pi i s}\big)$ of $\pi_1(X,*)$. In $B$ this closed path is homotopic to the constant closed path by the homotopy $s,\lambda \in [0,1]\mapsto \big(\frac{\theta_0}{2},\lambda e^{2\pi i s}\big)$. The same observation can be made for the other generator of $\pi_1(X,*)$. Hence $B$ is simply connected.

\subsubsection[The second af\/f\/ine part $\mathcal{M}_2(\theta_0,\theta_1)$]{The second af\/f\/ine part $\boldsymbol{\mathcal{M}_2(\theta_0,\theta_1)}$}\label{section2.3.2}
This space is obtained by dividing the open subset $b_0\neq 0$ by the action of $G$. Now $b_0\neq 0$ is normalized by the automorphism $e_1\mapsto \lambda e_1$ to $b_0=1$.

Using $e_2\mapsto e_2+(x_0+x_1z)e_2$ one normalizes further to $\tilde{D}e_1=a_2z^2e_1+(b_1z+1)e_2$ and $\tilde{D}e_2=(c_0+c_1z+c_2z^2+c_3z^3)e_1+(d_0+d_1z+d_2z^2)e_2$. The equations derived from the prescribed invariant lattices are

For $z=0$: $d_0=0$ and $c_0=\frac{\theta_0^2}{4}$.

For $z=1$: $a_2+d_1+d_2=0$ and $a_2(d_1+d_2)-(1+b_1)(c_0+c_1+c_2+c_3)=-\frac{\theta_1^2}{4}$.

For $z=\infty$: the operator $\frac{1}{z-1}\tilde{D}$ w.r.t.\ $e_1$, $z^{-1}e_2$ is
\begin{gather*}
z\frac{d}{dz}+\left(\begin{matrix} \frac{a_2z^2}{z-1}&
\frac{c_0+c_1z+c_2z^2+c_3z^3}{z(z-1)}\\ (1+b_1z)\frac{z}{z-1}&
-1+\frac{d_0+d_1z+d_2z^2}{z-1}\end{matrix}\right)
\end{gather*}
and is equivalent to
\begin{gather*}
z\frac{d}{dz}+\left(\begin{matrix}\frac{-3}{4}&tz\\ t &\frac{-1}{4}\end{matrix}\right).
 \end{gather*}
This yields the equations $d_2=-a_2$ (and from before $d_1+d_2=-a_2$ and thus $d_1=0$). Further $a_2^2+b_1c_3=0$ and
$c_3=t^2-b_1c_2-a_2$ is used to eliminate $c_3$. One obtains the equation
\begin{gather*}
c_1+c_2+t^2+b_1\frac{\theta_0^2}{4}+b_1c_1-a_2+\frac{\theta_0^2}{4}-\frac{\theta_1^2}{4}=0
\end{gather*} which eliminates $c_2$. The matrix dif\/ferential operator w.r.t.\ the basis $e_1$, $e_2$ is now
\begin{gather*}
\frac{d}{dz} +\frac{1}{z(z-1)} \left(\begin{matrix} a_2z^2&\frac{\theta_0^2}{4}+c_1z+c_2z^2+(t^2-b_1c_2-a_2)z^3\\ 1+b_1z& -a_2z^2\end{matrix}\right)
\end{gather*}
in the variables $a_2$, $b_1$, $c_1$, $t$ and one equation $a_2^2+b_1(-a_2-b_1c_2+t^2)=0$ with $c_2$ eliminated as above. {\it The above describes the space $\mathcal{M}_2(\theta_0,\theta_1)$.}

The {\it singular locus} of this three-dimensional variety is given by $\theta_1=0$, $a_2=0$, $b_1=-1$, $c_1=t^2-\frac{\theta_0^2}{4}$ (note that $t\neq 0$). As a~variety over $\mathbb{C}(t)$ the singularity occurs only for $\theta_1=0$ and consists of one point.

The two parts glue to the required space $\mathcal{M}(\theta _0,\theta_1)$.
\begin{Observation}\label{Observatio1.2} For any $a\in \mathbb{C}^*$ the closed subspace of $\mathcal{M}(\theta_0,\theta_1)$, defined by $t=a$, is simply connected.
\end{Observation}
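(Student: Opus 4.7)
The plan is to prove Observation~\ref{Observatio1.2} by Van Kampen's theorem applied to the natural open cover $\mathcal{M}_a=\mathcal{M}_{1,a}\cup\mathcal{M}_{2,a}$, where $\mathcal{M}_{i,a}:=\mathcal{M}_i(\theta_0,\theta_1)\cap\{t=a\}$ for $i=1,2$, using Observation~\ref{Observation1.1} directly for the first chart and an entirely parallel argument for the second.

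First I would establish simple connectedness of $\mathcal{M}_{2,a}$ by copying the method of Observation~\ref{Observation1.1}. After substituting for $c_2$ in the equation of Section~\ref{section2.3.2}, the defining equation takes the form $b_1^2(1+b_1)\,c_1=P(a_2,b_1,a)$ for an explicit polynomial~$P$. Projecting $(a_2,b_1,c_1)\mapsto(a_2,b_1)$ then has a single point as fibre whenever $b_1\notin\{0,-1\}$; a direct case check shows that at $b_1=0$ one is forced to have $a_2=0$ with~$c_1$ free, while at $b_1=-1$ one is forced to have $a_2=\pm\frac{\theta_1}{2}$ with~$c_1$ again free. Hence the image is
\begin{gather*}
B':=\mathbb{C}\times(\mathbb{C}\setminus\{0,-1\})\,\cup\,\bigl\{(0,0),\bigl(\tfrac{\theta_1}{2},-1\bigr),\bigl(-\tfrac{\theta_1}{2},-1\bigr)\bigr\},
\end{gather*}
and the contracting homotopies $s,\lambda\in[0,1]\mapsto\bigl(0,\lambda e^{2\pi is}\bigr)$ and $s,\lambda\in[0,1]\mapsto\bigl(\pm\frac{\theta_1}{2},-1+\lambda e^{2\pi is}\bigr)$ kill the two generators of $\pi_1\bigl(\mathbb{C}\times(\mathbb{C}\setminus\{0,-1\})\bigr)$ inside~$B'$, exactly as in the proof of Observation~\ref{Observation1.1}. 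Since every nonempty fibre of the projection is contractible, $\mathcal{M}_{2,a}$ inherits simple connectedness.

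Next I would check the remaining hypotheses of Van Kampen. Both $\mathcal{M}_{1,a}$ and $\mathcal{M}_{2,a}$ are open in~$\mathcal{M}_a$ by construction. Their intersection is the subspace on which both $b_0\neq0$ and $b_1\neq0$ hold in the unnormalised description of Section~\ref{section2.3}; it is an open dense subset of the irreducible two-dimensional variety~$\mathcal{M}_a$, and therefore path-connected. Van Kampen then yields
\begin{gather*}
\pi_1(\mathcal{M}_a)=\pi_1(\mathcal{M}_{1,a})*_{\pi_1(\mathcal{M}_{1,a}\cap\mathcal{M}_{2,a})}\pi_1(\mathcal{M}_{2,a})=1,
\end{gather*}
which finishes the proof.

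The main obstacle is the fibre bookkeeping in the first step: one must carefully separate the cases where the projection $(a_2,b_1,c_1)\mapsto(a_2,b_1)$ has empty fibre, point fibre, and full line fibre, and verify that the added points lie exactly where the sliding homotopies require them. Once that analysis is in place, the geometric structure of~$B'$ is essentially identical to the set~$B$ appearing in Observation~\ref{Observation1.1}, and the same two contractions close the argument.
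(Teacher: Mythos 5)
Your proof follows exactly the route the paper takes: Observation~\ref{Observation1.1} for the first affine chart, the same projection-and-contraction argument carried out explicitly for the second chart, and Van Kampen's theorem to glue. Your computation that the second chart's equation reads $b_1^2(1+b_1)c_1=P(a_2,b_1,a)$ with line fibres exactly over $(0,0)$ and $\big({\pm}\frac{\theta_1}{2},-1\big)$ is correct, so your write-up simply supplies the details the paper compresses into ``the same reasoning proves the statement.''
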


Indeed, by Observation \ref{Observation1.1}, this holds for the f\/irst open af\/f\/ine part of this space. For the second open af\/f\/ine part the same reasoning proves the statement. Van Kampen's theorem f\/inishes the proof.

\subsubsection[Resolving the singularities of $\mathcal{M}(\theta_0,\theta_1)$]{Resolving the singularities of $\boldsymbol{\mathcal{M}(\theta_0,\theta_1)}$}\label{section2.3.3}

For $\theta_0\neq 0$, $\theta_1\neq 0$, the space $\mathcal{M}(\theta _0,\theta_1)$ is the {\it fine moduli space} for the connections on the f\/ixed vector bundle $\mathcal{V}$ of rank 2 and degree $-1$ with the prescribed singularities. The space $\mathcal{M}(\theta_0,\theta_1)$ is smooth and simply connected for any nonzero f\/ixed value of~$t$. Its set of closed points $\mathcal{M}(\theta_0,\theta_1)(\mathbb{C})$ coincides by construction with ${\bf S}(\theta_0,\theta_1)$.

For $\theta_0=0$ and/or for $\theta_1=0$ the space $\mathcal{M}(\theta_0,\theta_1)$ has singularities and is no longer a f\/ine moduli space. Consider the case $\theta_0=0$ and $\theta_1\neq 0$.

A tuple $(M,\theta_0=0,\theta_1,t)$ is represented at $z=0$ by a~lattice $\Lambda \subset \mathbb{C}((z))\otimes M$ such that $\delta (\Lambda )\subset \Lambda$ and the action of $\delta:=zD$ on $\Lambda /z\Lambda$ has trace zero and determinant zero. There exists a~basis of $\Lambda$ over $\mathbb{C}[[z]]$ such that $\delta$ has the matrix $\left(\begin{smallmatrix} 0& 0\\ 0 & 0 \end{smallmatrix}\right)$ or $\left(\begin{smallmatrix}0 & 1\\ 0 & 0 \end{smallmatrix}\right)$. The space $\mathcal{M}(\theta_0=0,\theta_1)$ does not distinguish between these cases. Therefore it is not a f\/ine moduli space and moreover it has a singularity.

There is a {\it geometric way} to treat these problems. One adds to the data $(M,\theta_0=0,\theta_1,t)$ a~`line'. This means the following. The assumption $\theta_0=0$ def\/ines a lattice $\Lambda\subset \mathbb{C}((z))\otimes M$ invariant under~$\delta$. The `line' is a 1-dimensional summand of $\Lambda$,
invariant under $\delta$. We note that there are two cases:
\begin{enumerate}\itemsep=0pt
\item[(1)] $\delta$ has matrix $\left(\begin{smallmatrix} 0& 0 \\ 0 & 0\end{smallmatrix}\right)$ on a basis $e_1$, $e_2$ of $\Lambda$. Then the possible lines are $\mathbb{C}[[z]]e$ with $e\in \mathbb{C}e_1+\mathbb{C}e_2$, $e\neq 0$. The possibilities form a $\mathbb{P}^1$ over $\mathbb{C}$.
\item[(2)] $\delta$ has matrix $\left(\begin{smallmatrix} 0& 1 \\ 0& 0\end{smallmatrix}\right)$ on a basis $e_1$, $e_2$ of~$\Lambda$. Then $\mathbb{C}[[z]]e_1$ is the only possible `line'.
\end{enumerate}

This additional `line' is called a ``parabolic structure'' (see~\cite{Ina06,IIS1,IIS2}) or a ``level structure''. This def\/ines a new set ${\bf S}^+(\theta_0=0,\theta_1)$ and a new moduli problem consisting of connections on the above $\mathcal{V}$ and an invariant line in $\mathcal{V}_0\otimes
\mathbb{C}[[z]]$ (here $\mathcal{V}_0$ denotes the stalk of $\mathcal{V}$ at $z=0$). There is a f\/ine moduli space which we call $\mathcal{M}^+(\theta_0=0,\theta_1)$. The natural morphism $\mathcal{M}^+(\theta_0= 0,\theta_1)\rightarrow \mathcal{M}(\theta_0= 0,\theta_1)$ turns out to be the resolution of the latter space, seen as a surface over $\mathbb{C}(t)$ or as a~surface after f\/ixing a value for $t$. The preimage of the singular point is the projective line over~$\mathbb{C}$. This construction is also present in the papers \cite{vdP-T1,vdP-T2} and we will not make it explicit here.

Something similar has to be done for the case $\theta_0\neq 0$, $\theta_1=0$ and the case $\theta_0=\theta_1=0$. In all cases we write $\mathcal{M}^+(\theta_0,\theta_1)$ for the moduli space obtained in this way and, for notational convenience we write $\mathcal{M}^+(\theta_0,\theta_1)=\mathcal{M}(\theta_0,\theta_1)$ also in the case $\theta_0\neq 0$, $\theta_1\neq 0$.

\begin{Observation}\label{1.1} Consider a lattice $\Lambda$ over $\mathbb{C}[[z]]$ of rank two with a differential operator $\delta$ which has operator form $z\frac{d}{dz}+\left(\begin{smallmatrix} a& c\\ b& d\end{smallmatrix}\right)$ with $a,b,c,d\in \mathbb{C}[[z]]$ and $\left(\begin{smallmatrix}a & c\\ b& d \end{smallmatrix}\right)\equiv \left(\begin{smallmatrix}\alpha & 0 \\ 0 & \beta \end{smallmatrix}\right) \mod (z)$. Suppose that $\beta-\alpha \not\in \mathbb{Z}_{<0}$. Then $\Lambda$ has a unique direct summand $\mathbb{C}[[z]]e$ such that $\delta e=\alpha e$.
\end{Observation}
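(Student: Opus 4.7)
The plan is to solve $\delta e = \alpha e$ by a formal power-series ansatz and to trace the non-resonance hypothesis back to the solvability of the resulting recursion. A rank-one direct summand of $\Lambda$ is determined by its reduction modulo $z$, and modulo $z$ the equation $\delta e = \alpha e$ forces $e$ into the $\alpha$-eigenspace of the diagonal leading term, i.e., the line $\mathbb{C}e_1$ picked out by the $(1,1)$-entry. So I would write $e = f e_1 + g e_2$ and normalize by a unit in $\mathbb{C}[[z]]$ so that $f \in 1 + z\mathbb{C}[[z]]$ and $g \in z\mathbb{C}[[z]]$. Using that $a-\alpha$, $b$, $c$, $d-\beta$ all lie in $z\mathbb{C}[[z]]$, the eigenvalue equation then splits into the scalar system
\begin{gather*}
zf' + (a-\alpha)f + cg = 0, \qquad zg' + bf + (d-\alpha)g = 0.
\end{gather*}

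Expanding $f = 1 + \sum_{n\geq 1} f_n z^n$ and $g = \sum_{n\geq 1} g_n z^n$ and reading off the coefficient of $z^n$ for each $n\geq 1$, one obtains a triangular pair
\begin{gather*}
n f_n = \Phi_n, \qquad (n+\beta-\alpha)\, g_n = \Psi_n,
\end{gather*}
where $\Phi_n$ and $\Psi_n$ are polynomial expressions in the lower-order unknowns $f_1,\ldots,f_{n-1}$, $g_1,\ldots,g_{n-1}$ and in the Taylor coefficients of $a$, $b$, $c$, $d$. The first equation determines $f_n$ unconditionally; the second determines $g_n$ precisely when $n+\beta-\alpha \neq 0$ for every $n \geq 1$, which is exactly the content of the hypothesis $\beta - \alpha \notin \mathbb{Z}_{<0}$. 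Thus the coefficient sequences exist and are uniquely determined, producing an $e$ with the required properties.

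Uniqueness of the summand itself follows because any other generator $e'$ of a rank-one invariant direct summand with $\delta e' = \alpha e'$ has, by the same mod-$z$ analysis, a leading term proportional to $e_1$; rescaling by the appropriate unit in $\mathbb{C}[[z]]$ reduces $e'$ to our normalized form, and the recursion then forces $e' = e$ term by term. The only substantive obstacle is the resonant denominator $n+\beta-\alpha$: if $\beta-\alpha = -N$ for some $N \geq 1$, the equation for $g_N$ degenerates to $0 \cdot g_N = \Psi_N$, which either fails to be solvable or admits a one-parameter family of solutions, so both existence and uniqueness can fail without the hypothesis. Everything else is routine formal-power-series bookkeeping.
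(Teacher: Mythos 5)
Your argument is correct, and it runs on exactly the same engine as the paper's proof: a formal power-series recursion whose only possible obstruction is the resonant denominator $n+\beta-\alpha$, $n\geq 1$. The packaging is different, though. The paper first conjugates by a diagonal matrix so that the diagonal entries become exactly $\alpha$, $\beta$, and then looks for a gauge transformation $\left(\begin{smallmatrix}1&0\\ x&1\end{smallmatrix}\right)$ triangularizing the operator; the single unknown series $x$ (which is your $g/f$, the slope of the invariant line) satisfies the Riccati equation $z\frac{dx}{dz}+(\beta-\alpha)x+b-cx^{2}=0$, solved coefficientwise. You instead attack the eigenvector equation $\delta e=\alpha e$ head-on as a linear first-order system for the pair $(f,g)$. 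What your route buys is the absence of the quadratic term and of the preliminary diagonal normalization, at the cost of carrying two coefficient sequences instead of one; the point where the hypothesis $\beta-\alpha\notin\mathbb{Z}_{<0}$ enters, namely $(n+\beta-\alpha)g_n=\Psi_n$, is identical in both proofs. One wording issue: only a \emph{constant} rescaling of $e$ preserves the equation $\delta e=\alpha e$ (a non-constant unit $u$ shifts the eigenvalue by $zu'/u$); this is harmless here because the constant $f(0)^{-1}$ already puts $f$ into $1+z\mathbb{C}[[z]]$, but you should say ``nonzero constant'' rather than ``unit in $\mathbb{C}[[z]]$''.

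A caveat that applies equally to your proof and to the paper's: the step ``mod $z$, the equation forces $e$ onto the line $\mathbb{C}e_1$'' uses that the $\alpha$-eigenspace of $\left(\begin{smallmatrix}\alpha&0\\ 0&\beta\end{smallmatrix}\right)$ is one-dimensional, which requires $\alpha\neq\beta$. The case $\beta=\alpha$ is allowed by the stated hypothesis (since $0\notin\mathbb{Z}_{<0}$), and there uniqueness genuinely fails: for $z\frac{d}{dz}+\left(\begin{smallmatrix}\alpha&0\\ 0&\alpha\end{smallmatrix}\right)$ every nonzero constant vector spans such a summand. So the Observation should really be read with $\beta-\alpha\notin\mathbb{Z}_{\leq 0}$; in the paper's application one has $\beta-\alpha=\theta_0\in\mathbb{Z}_{>0}$, so nothing is lost.
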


The proof is obtained by conjugating the operator $z\frac{d}{dz}+\left(\begin{smallmatrix}a& c\\ b& d\end{smallmatrix}\right)$ with a suitable invertible matrix $\left(\begin{smallmatrix}t_1 & 0\\ t_2& t_3 \end{smallmatrix}\right)\in {\rm GL}_2(\mathbb{C}[[z]])$. After conjugation with a~diagonal matrix in ${\rm GL}_2(\mathbb{C}[[z]])$ one can suppose that the dif\/ferential operator is $z\frac{d}{dz}+\left(\begin{smallmatrix}\alpha & c\\ b & \beta
 \end{smallmatrix}\right)$ and $b,c\in z\mathbb{C}[[z]]$. There exists $x\in \mathbb{C}[[z]]$ such that $\left(\begin{smallmatrix}1& 0\\ -x& 1
\end{smallmatrix}\right)\big\{z\frac{d}{dz}+\left(\begin{smallmatrix}\alpha & c\\ b & \beta \end{smallmatrix}\right) \big\}\left(\begin{smallmatrix}1& 0\\ x& 1 \end{smallmatrix}\right)$ equals $z\frac{d}{dz}+\left(\begin{smallmatrix}\alpha & *\\ 0 & \beta\end{smallmatrix}\right)$. Indeed, this condition is equivalent to the equation $z\frac{dx}{dz}+(\beta -\alpha)x+b-cx^2=0$. Write $x=\sum\limits_{n\geq 1}a_nz^n$. The equation reads
\begin{gather*} \sum _{n\geq 1}(n+\beta -\alpha)a_nz^n+\sum_{n\geq 1} b_nz^n
-\sum_{n\geq 1} c_nz^n\cdot \bigg(\sum _{n\geq 1}a_nz^n\bigg)^2=0 \end{gather*}
and there is a unique solution.

Now we consider the moduli space $\mathcal{M}(\theta_0,\theta_1)$ for some $\theta_0\in \mathbb{Z}_{>0}$. From Observation~\ref{1.1} one concludes that the invariant lattice $\Lambda$ over $\mathbb{C}[[z]]$ has a direct summand $\mathbb{C}[[z]]e$ such that $\delta e=-\frac{\theta_0}{2}e$. In other words, there is a unique `line' present in this situation and the `level structure' at $z=0$ is already present. This explains why $\mathcal{M}(\theta_0,\theta_1)$ has no singularities for non zero integer values of~$\theta_0$ and~$\theta_1$.

Another interesting way to produce the correct moduli space for the cases $\theta_0=0$ and/or $\theta_1=0$ is the following. A dif\/ferential module $M$ with the required data is represented by a~connection on a~vector bundle $\mathcal{V}$ of rank~2, with degree~$-3$ instead of~$-1$. Then $\mathcal{V}$ is identif\/ied with $O(-[\infty])e_1\oplus O(-2[\infty ])e_2$. The connection $\nabla \colon \mathcal{V}\rightarrow \mathcal{V}\otimes \Omega([0]+[1]+2[\infty ])$ is prescribed by the local dif\/ferential operators
\begin{gather*}
z\frac{d}{dz}+\left(\begin{matrix} \frac{\theta_0}{2} & 0\\ 0 & -\frac{\theta_0}{2}-1 \end{matrix}\right), \qquad (z-1)\frac{d}{dz}+\left(\begin{matrix}\frac{\theta_1}{2}& 0 \\ 0 & -\frac{\theta_1}{2}-1 \end{matrix}\right), \qquad z\frac{d}{dz}+\left(\begin{matrix} -\frac{3}{4}& t^2z\\ 1& -\frac{1}{4} \end{matrix}\right).
\end{gather*}

As in Section~\ref{section2.3} one has to consider two af\/f\/ine parts. On the f\/irst part the operator reads
\begin{gather*}
\frac{d}{dz}+\frac{1}{z(z-1)}\left(\begin{matrix} a_0& c_0+c_1z+c_2z^2+c_3z^3\\ z+b_0& d_0+d_1z+d_2z^2
\end{matrix}\right)
\end{gather*} and on the second part it is
\begin{gather*}
\frac{d}{dz}+\frac{1}{z(z-1)}\left(\begin{matrix} a_2z^2& c_0+c_1z+c_2z^2+c_3z^3\\ 1+b_1z & d_0+d_1z+d_2z^2 \end{matrix}\right).
\end{gather*} The equations for the entries in these matrices are similar to those of Sections~\ref{section2.3.1} and~\ref{section2.3.2}. After a computation one f\/inds that singular points only occur for the cases $\theta_0=-1$ and/or $\theta_1=-1$. In particular, this produces a smooth moduli space for, say, $\theta_0=0$ and $\theta_1\neq -1$. Other choices for~$\mathcal{V}$ with negative odd degree $-2d-1$ and local equations at $z=0$ and $z=1$ where the above matrices have traces $-d$, $-d$ can be used to construct smooth moduli spaces for all combinations of~$\theta_0$ and~$\theta_1$.

In the sequel we write $\mathcal{M}^+(\theta_0,\theta_1)$ for the resolution of the space $\mathcal{M}(\theta_0,\theta_1)$. We note that $\mathcal{M}^+(\theta_0,\theta_1)$ is in general not yet the Okamoto--Painlev\'e space for the following reason. For any $a\in \mathbb{C}^*$, the closed subspace of $\mathcal{M}^+(\theta_0,\theta_1)$, given by $t=a$, is simply connected. This follows from Observation~\ref{Observatio1.2} and the fact that a~f\/ibre of~$\mathcal{M}^+(\theta_0,\theta_1)\rightarrow \mathcal{M}(\theta_0,\theta_1)$ is either a~point or a~projective line over~$\mathbb{C}$.

The `$t$-part' of $\mathcal{M}^+(\theta_0,\theta_1)$ runs in $\mathbb{C}^*$, which is not simply connected. In the earlier def\/inition of Okamoto's space of initial values `simply connected' was required. However, in one of the later papers of Okamoto et al.~\cite{OKSO} the condition `simply connected' for `the space of initial values' is removed.

The other reason to replace $t$ by $e^{2\pi i u}$ with $u\in \mathbb{C}$ is the following. The map from a tuple $(M,\theta_0,\theta_1,t)$ to the monodromy data at $z=\infty$ depends on the choice of a direction for multisummation. This direction has to be dif\/ferent from the singular direction and the latter moves with~$t$.

As we will see in Section~\ref{section4}, the monodromy space $\mathcal{R}(s_0,s_1)$ has for the values $s_0=\pm 2$ and $s_1=\pm 2$ singular points. Further $s_0=e^{\pi i\theta_0} +e^{-\pi i\theta _0}$ and $s_1=e^{\pi i\theta_1}+e^{-\pi i \theta _1}$. Also in this case one has to add a similar level structure as a method to obtain a desingularisation $\mathcal{R}^+(s_0,s_1)$ of~$\mathcal{R}(s_0,s_1)$. Further we will show that $\mathcal{R}^+(s_0,s_1)$ is simply connected.

\section[Computation of the Painlev\'e equation ${\rm degP}_{\rm V}$]{Computation of the Painlev\'e equation $\boldsymbol{{\rm degP}_{\rm V}}$}\label{section3}
This calculation is done on the f\/irst af\/f\/ine part of the space $\mathcal{M}(\theta_0,\theta_1)$. There an isomonodromic family $\frac{d}{dz}+A$ with $A=A(z,t)$ is given and computed by the assumption that this operator commutes with an unknown operator $\frac{d}{dt}+B$ with $B=B(z,t)$. This is equivalent to the formula $\frac{\partial A}{\partial t}=\frac{\partial B}{\partial z}+[A,B]$.

From the singularities of $A$ one derives that $B$ is w.r.t.\ $z$ a polynomial matrix of degree at most~1. Further, both $A$ and $B$ are $2\times 2$-matrices with trace~0. This we use to make the computations smoother.

Write $H=\left(\begin{smallmatrix}1& 0\\ 0& -1\end{smallmatrix}\right)$, $E_1=\left(\begin{smallmatrix}0& 1\\ 0& 0\end{smallmatrix}\right)$, $E_2=\left(\begin{smallmatrix}0& 0 \\ 1& 0\end{smallmatrix}\right)$. Observe $[H,E_1]=2E_1$, $[H,E_2]=-2E_2$, $[E_1,E_2]=H$. In the sequel we write $f'$ for $\frac{df}{dt}$. Write $A=\frac{a_0}{z(z-1)}H+\frac{c_0+c_1z+t^2z^2}{z(z-1)}E_1+\frac{z+b_0}{z(z-1)}E_2$. Write $B=B_HH+B_1E_1+B_2E_2$ and $B_H=B_{H,0}+B_{H,1}z$, $B_1=B_{1,0}+B_{1,1}z$, $B_2=B_{2,0}+B_{2,1}z$ where the $B_{*,*}$ only depend on~$t$. The equation $\frac{\partial A}{\partial t}=\frac{\partial B}{\partial z}+[A,B]$, multiplied by $z(z-1)$, has coef\/f\/icients with respect to the basis $H$, $E_1$, $E_2$ which read:
\begin{alignat*}{3}
 & (H)\quad && a_0'=z(z-1)B_{H,1}+\big(c_0+c_1z+t^2z^2\big)(B_{2,0}+B_{2,1}z)-(z+b_0)(B_{1,0}+zB_{1,1}), &\\
 & (E_1)\quad && c_0'+c_1'z+2tz^2=z(z-1)B_{1,1}+2a_0(B_{1,0}+B_{1,1}z)& \\
 &&& \qquad{} -2(B_{H,0}+B_{H,1}z)\big(c_0+c_1z+t^2z^2\big), &\\
& (E_2)\quad && b_0'=z(z-1)B_{2,1}-2a_0(B_{2,0}+B_{2,1}z)+2(B_{H,0}+B_{H,1}z)(z+b_0).&
\end{alignat*}
Each of these three equations is considered with respect to the degrees in $z$. A sequence of solving equations (in a suitable order!) yields the following:
\begin{enumerate}\itemsep=0pt
\item[$(H)$] degree 3 implies $B_{2,1}=0$; degree 2 implies $0=B_{H,1}+t^2B_{2,0}-B_{1,1}$ and so $B_{2,0}=2t^{-1}$; degree 1 implies $0=c_1B_{2,0}-B_{1,0}-b_0B_{1,1}$ and so $B_{1,0}=c_1\cdot 2t^{-1}-b_0\cdot 2t$; degree 0 implies $a_0'=c_0B_{2,0}-b_0B_{1,0}$ and so $a_0'=2t^{-1}c_0-b_0(2t^{-1}c_1-2tb_0)$;
\item[$(E_1)$] degree 3 implies $B_{H,1}=0$; degree 2 implies $2t=B_{1,1}-2B_{H,0}t^2$ and so $B_{1,1}=2t$; degree~1 implies $c_1'=-B_{1,1}+2a_0B_{1,1}$ and so $c_1'=2t(2a_0-1)$; degree~0 implies $c_0'=2a_0B_{1,0}$ and so $c_0'=2a_0(c_1\cdot 2t^{-1}-b_0\cdot 2t)$;
\item[$(E_2)$] $b_0'=-2a_0B_{2,0}+2B_{H,0}(z+b_0)$ and thus $B_{H,0}=0$ and $b_0'=-2a_0B_{2,0}$ and so $b_0'=-2a_0\cdot 2t^{-1}$.
\end{enumerate}

This leads to the following set of equations:
\begin{alignat*}{3}
& (1)\quad && a_0^2+b_0c_0=\frac{\theta_0^2}{4},&\\
& (2)\quad && c_0=-\frac{\theta_0^2}{4}+\frac{\theta_1^2}{4}-c_1-t^2-b_0c_1-b_0t^2, &\\
& (3)\quad && a_0'=2t^{-1}c_0-b_0\big(2t^{-1}c_1-2tb_0\big),& \\
& (4)\quad && c_1'=-2t+4a_0t,& \\
& (5)\quad && c_0'=2a_0\big(2t^{-1}c_1-2tb_0\big),&\\
& (6)\quad && b_0'=-4a_0t^{-1}.&
\end{alignat*}
This is solved, using MAPLE, by the following steps: Eliminate $c_0$ by~(2) and $a_0$ by using~(6). In the new set of equations one can eliminate~$c_1$ in a~linear way. Then MAPLE yields a~second-order equation for~$b_0$. Write the matrix dif\/ferential operator as $\frac{d}{dz}+\left(\begin{smallmatrix} a & c\\ b& -a \end{smallmatrix}\right)$. Then taking the f\/irst vector as cyclic vector one obtains a scalar equation $\big(\frac{d}{dz}\big)^2-\frac{c'}{c}\frac{d}{dz}-a'-a^2-bc+a\frac{c'}{c}$. The~$q$ for the Painlev\'e equation is the pole $\neq 0,1,\infty $ of this
scalar equation. Thus $q=-b_0$. The Hamiltonian system for the Painlev\'e equation has variables~$q$ and~$p$ where~$p$ is the residue of the term $-a'-a^2-bc+a\frac{c'}{c}$ at $z=q$. One f\/inds in this way for ${\rm degP}_{\rm V}$ and the Hamiltonian function~$\mathcal{H}$ the formulas
\begin{gather*}
q''=\frac{1}{2}\left(\frac{1}{q}+\frac{1}{q-1}\right)(q')^2-\frac{q'}{t}+\frac{2(q-1)\theta_0^2}{qt^2}-\frac{2q\theta_1^2}{(q-1)t^2}
+8q(q-1),\\
 p=\frac{t}{4}q', \qquad \mathcal{H}=\frac{2(p^2-\frac{\theta_0^2}{4})}{tq}-\frac{2(p^2-\frac{\theta_1^2}{4})}{t(q-1)}
+2qt,
\end{gather*}
with here, {\it exceptionally}, $q'=q(1-q)\frac{\partial \mathcal{H}}{\partial p}$ and $p'=-q(1-q)\frac{\partial \mathcal{H}}{\partial q}$.

We note that the formula for ${\rm degP}_{\rm V}$ coincides with the one in~\cite{vdP-Sa}.

{\it The relation between this ${\rm degP}_{\rm V}$ and the classical degenerate ${\rm P}_{\rm V}$ is the following}. Consider solutions $q(t)$ of ${\rm degP}_{\rm V}$ which are {\it even}. Then these are written as $q(t)=Q(t^2)$ for some function~$Q(s)$. One easily computes that the second-order dif\/ferential equation for~$Q$ is
\begin{gather*} Q''=\frac{1}{2}\left(\frac{1}{Q}+\frac{1}{Q-1}\right) (Q')^2-\frac{Q'}{s}+\frac{(Q-1)\frac{\theta_o^2}{2}}{Q s^2}
-\frac{Q\frac{\theta_1^2}{2}}{(Q-1)s^2}+\frac{2Q(Q-1)}{s}.
\end{gather*}
One substitutes $Q=\frac{y}{y-1}$ and f\/inds for $y$ the second-order dif\/ferential equation
\begin{gather*}
y''=\frac{1}{2}\frac{3y-1}{y(y-1)}(y')^2-\frac{y'}{s}+\frac{y(y-1)^2\theta_1^2}{2s^2}-\frac{(y-1)^2\theta_0^2}{2s^2y}
-\frac{2y}{s}.\end{gather*}
This is the classical degenerate ${\rm P}_{\rm V}(\alpha, \beta ,\gamma, \delta)$, normalized as the following special case
${\rm P}_{\rm V}\big(\frac{\theta_1^2}{2}$, $-\frac{\theta_0^2}{2},-2,0\big)$ of ${\rm P}_{\rm V}$.

\begin{Remark} The formula for ${\rm P}_{\rm V}$ in \cite{vdP-Sa} reduces to the classical formula for ${\rm P}_{\rm V}$ as well, using the same substitution $q=\frac{y}{y-1}$.
\end{Remark}

\section[The moduli space for the analytic data $\mathcal{R}(s_0,s_1)$]{The moduli space for the analytic data $\boldsymbol{\mathcal{R}(s_0,s_1)}$}\label{section4}

We reproduce here, with a slightly dif\/ferent choice of signs, the paper~\cite{vdP-Sa}. The solution space $V$ at $z=\infty$ has a basis $e_1$, $e_2$ such that the formal monodromy is $\left(\begin{smallmatrix}0& -1\\ 1& 0\end{smallmatrix}\right)$ and the only Stokes matrix has the matrix
$\left(\begin{smallmatrix}1& 0\\ e & 1\end{smallmatrix}\right)$. The topological monodromy at $z=\infty$ is the product $M_\infty =\left(\begin{smallmatrix}-e& -1\\ 1& 0\end{smallmatrix}\right)$. By multisummation in a suitable direction we combine this with the monodromy matrices~$M_0$ and~$M_1$ for loops around $z=0$ and $z=1$. Put $s_0=e^{\pi i\theta_0}+e^{-\pi i \theta_0}$, $s_1=e^{\pi i\theta_1}+e^{-\pi i \theta_1}$. Then~$M_0$,~$M_1$ have determinants 1 and traces~$s_0$,~$s_1$. One has the relation $M_0M_1M_\infty=1$.

The basis $e_1$, $e_2$ is unique up to $e_1,e_2\mapsto \lambda e_1,\lambda e_2$. This transformation acts trivially on the matrices. Write $M_1=\left(\begin{smallmatrix}a_1& b_1 \\ c_1& d_1\end{smallmatrix}\right)$. Then $M_0$ is the inverse of $M_1M_\infty$. Hence $\mathcal{R}(s_0,s_1)$ is the af\/f\/ine space with coordinate ring $\mathbb{C}[a_1,b_1,c_1,d_1,e]$ with the relations
\begin{gather*} a_1d_1-b_1c_1=1, \qquad a_1+d_1=s_1,\qquad -a_1e+b_1-c_1=s_0.
\end{gather*}
Elimination of $c_1$, $d_1$ and $x_1=-b_1$, $x_2=-a_1$, $x_3=-e$ leads to the ring $\mathbb{C}[x_1,x_2,x_3]/(x_1x_2x_3+x_1^2+x_2^2+s_0x_1+s_1x_2+1)$.

Only for $s_1=\pm 2$ and for $s_0=\pm 2$ the corresponding af\/f\/ine cubic surface $\mathcal{R}(s_0,s_1)$ has singularities:
\begin{gather*}
s_1=\pm 2,\qquad x_1=0,\qquad x_2=\mp 1, \qquad x_3=\pm s_0, \qquad \text{and}\\
s_0=\pm 2,\qquad x_1=\mp 1, \qquad x_2=0, \qquad x_3=\pm s_1.
\end{gather*}
In particular for $s_0\neq \pm 2$, $s_1\neq \pm 2$ the space $\mathcal{R}(s_0,s_1)$ has no singularities.

\begin{Observation} $\mathcal{R}(s_0,s_1)$ is simply connected for all $s_0$, $s_1$.
 \end{Observation}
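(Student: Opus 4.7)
The plan is to imitate the proof of Observation~\ref{Observation1.1}: project $\mathcal{R}(s_0,s_1)$ onto $\mathbb{C}^2$, identify a dense open subset with $\mathbb{C}^*\times\mathbb{C}^*$, and kill the two generators of its fundamental group using the ``vertical'' lines present in the complement.

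Since the defining equation $f=x_1x_2x_3+x_1^2+x_2^2+s_0x_1+s_1x_2+1$ is linear in $x_3$ with coefficient $x_1x_2$, the projection $\pi\colon \mathcal{R}\rightarrow \mathbb{C}^2$, $(x_1,x_2,x_3)\mapsto(x_1,x_2)$, restricts to an isomorphism between $U:=\pi^{-1}(\mathbb{C}^*\times\mathbb{C}^*)$ and $\mathbb{C}^*\times\mathbb{C}^*$, with inverse
\begin{gather*}
(x_1,x_2)\mapsto \bigl(x_1,x_2,-(x_1^2+x_2^2+s_0x_1+s_1x_2+1)/(x_1x_2)\bigr).
\end{gather*}
The complement $\mathcal{R}\setminus U$ is the disjoint union of the ``vertical'' lines $L_\alpha:=\{(0,\alpha)\}\times\mathbb{C}$ for $\alpha$ a root of $x^2+s_1x+1$, and $L'_\beta:=\{(\beta,0)\}\times\mathbb{C}$ for $\beta$ a root of $x^2+s_0x+1$. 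Note that $(0,0)\notin \pi(\mathcal{R})$ since $f(0,0,x_3)=1$, and that both quadratics have nonzero roots since their constant term is~$1$.

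Since $\mathcal{R}\setminus U$ has real codimension two in $\mathcal{R}$, the inclusion $U\hookrightarrow \mathcal{R}$ induces a \emph{surjection} $\pi_1(U)=\mathbb{Z}^2\rightarrow \pi_1(\mathcal{R})$, so it suffices to show that the standard generators $\gamma_1,\gamma_2$ (loops around $\{x_1=0\}$ and $\{x_2=0\}$) die in $\pi_1(\mathcal{R})$. For~$\gamma_1$, I would fix any root $\alpha$ of $x^2+s_1x+1$ and any smooth point $p=(0,\alpha,x_3^0)\in L_\alpha\subset \mathcal{R}$; such a $p$ exists because an inspection of~$df$ shows the singular locus of $\mathcal{R}$ on $L_\alpha$ consists of at most one point (namely $x_3^0=-s_0/\alpha$, only when $\alpha=\pm 1$). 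Near~$p$ the pair $(x_1,x_3)$ furnishes local holomorphic coordinates on $\mathcal{R}$, so the slice $\{x_3=x_3^0\}\cap \mathcal{R}$ is a small holomorphic disk through~$p$ transverse to~$L_\alpha$. Its boundary lies in $U$ and projects under $\pi$ to a small loop around $(0,\alpha)$ in $\mathbb{C}^*\times\mathbb{C}^*$, which is freely homotopic to~$\gamma_1$. Since $\pi_1(\mathcal{R})$ is abelian (as a quotient of~$\mathbb{Z}^2$), free null-homotopy gives $\gamma_1=1$ in $\pi_1(\mathcal{R})$; the same argument with any~$L'_\beta$ kills~$\gamma_2$.

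In the singular cases $s_0=\pm 2$ or $s_1=\pm 2$, the singular points of $\mathcal{R}$ are isolated (and explicitly listed above). Applying the disk argument on $\mathcal{R}\setminus \mathrm{Sing}(\mathcal{R})$ shows $\pi_1(\mathcal{R}\setminus \mathrm{Sing}(\mathcal{R}))=1$; a neighborhood of an isolated hypersurface singularity is a contractible cone on its link, so Van Kampen exhibits $\pi_1(\mathcal{R})$ as a quotient of $\pi_1(\mathcal{R}\setminus \mathrm{Sing}(\mathcal{R}))$, hence trivial. The main technical step is the surjection $\pi_1(U)\rightarrow \pi_1(\mathcal{R})$, which is a standard general-position argument (any loop in $\mathcal{R}$ can be perturbed off the real-codimension-two subset $\mathcal{R}\setminus U$), together with the explicit identification of the transverse disk inside~$\mathcal{R}$.
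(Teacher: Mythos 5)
Your argument is correct for $s_0\neq \pm 2$, $s_1\neq \pm 2$, and there it is essentially the paper's proof seen upstairs: the paper projects to $\mathbb{C}^2$ and contracts the generators of $\pi_1((\mathbb{C}^*)^2)$ inside the image $B$ through the added points $(0,\alpha)$ and $(\beta,0)$, and your transverse disk is a lift of that contraction. But there is a genuine gap precisely in the degenerate cases $s_1=\pm 2$ (and symmetrically $s_0=\pm 2$), which are the cases the paper most needs. Your claim that $(x_1,x_3)$ are local coordinates near a smooth point $p=(0,\alpha,x_3^0)$ of $L_\alpha$ requires $\frac{\partial f}{\partial x_2}=x_1x_3+2x_2+s_1\neq 0$ at $p$, i.e., $2\alpha+s_1\neq 0$; since $\alpha^2+s_1\alpha+1=0$, this fails exactly when $\alpha$ is a double root, i.e., when $s_1=\pm 2$ and $\alpha=\mp 1$ --- and then it fails at \emph{every} point of $L_\alpha$, not just at the singular one. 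In that case the slice $\{x_3=x_3^0\}$ is the plane curve $x_1h(x_1,x_2)+(x_2-\alpha)^2=0$ with $h(0,\alpha)=\alpha x_3^0+s_0\neq 0$, which is tangent to order two to $\{x_1=0\}$; its boundary represents $\gamma_1^2$, not $\gamma_1$, so your disk only yields $2\gamma_1=0$ and leaves a possible $\mathbb{Z}/2$ (or $(\mathbb{Z}/2)^2$) in $\pi_1$. The residual class is not an artifact: the singular point is an $A_1$ point whose link has fundamental group $\mathbb{Z}/2$, so your intermediate claim $\pi_1(\mathcal{R}\setminus \mathrm{Sing}(\mathcal{R}))=1$ should be expected to be \emph{false}, with $\gamma_1$ surviving there as the link generator.

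The repair is the disk the paper implicitly uses: contract $\gamma_1$ by $(s,\lambda)\mapsto (\lambda e^{2\pi i s},\alpha)$ in the base, whose lift to $\mathcal{R}$ is the embedded holomorphic disk $w\mapsto \bigl(w,\alpha,-(w+s_0)/\alpha\bigr)$. Its boundary winds exactly once around $\{x_1=0\}$ for every root $\alpha$, simple or double, and it is permitted to pass through the singular point $(0,\alpha,-s_0/\alpha)$, since a null-homotopy in $\mathcal{R}$ need not avoid singularities. This handles all $s_0$, $s_1$ uniformly and makes the detour through $\mathcal{R}\setminus \mathrm{Sing}(\mathcal{R})$ and the cone-on-the-link argument unnecessary.
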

 \begin{proof} Consider the projection of $\mathcal{R}(s_0,s_1)\rightarrow \mathbb{C}^2$, by $(x_1,x_2,x_3)\mapsto (x_1,x_2)$. The f\/ibres of this map are either a~point or $\mathbb{C}$ or empty. The image $B$ is the union of~$(\mathbb{C}^*)^2$ with the points $\{(0,x_2)\,|\, x_2^2+s_1x_2+1=0\}$ and $\{(x_1,0)\,|\, x_1^2+s_0x_1+1=0\}$. It suf\/f\/ices to show that~$B$ is simply connected. The inclusion $(\mathbb{C}^*)^2\subset B$ induces a surjection
$\pi_1((\mathbb{C}^*)^2,*)\rightarrow \pi_1(B,*)$. Consider a generator of $\pi_1((\mathbb{C}^*)^2,*)$, represented by the loop $s\in [0,1]\mapsto (\tilde{x}_1,e^{2\pi i s})$, where $\tilde{x}_1$ is chosen such that $\tilde{x}_1^2+s_0\tilde{x}_1+1=0$. In $B$ this loop is homotopic to
the constant loop $s\mapsto (\tilde{x}_1,0)$ by the homotopy $(s,\lambda) \in [0,1]^2\mapsto (\tilde{x}_1,\lambda e^{2\pi i s})$. One concludes that the two generators of $\pi_1((\mathbb{C}^*)^2,*)$ have trivial image in $\pi_1(B,*)$ and that $\pi_1(B,*)=1$.
 \end{proof}

As in Section~\ref{section2.3.3}, {\it the geometric way} to resolve the singularities of $\mathcal{R}(s_0,s_1)$ for $s_0=\pm 2$ and/or $s_1=\pm 2$ is to add a level structure consisting of a line (or two lines if both $s_0=\pm 2$ and $s_1=\pm 2$). The resulting space is denoted by $\mathcal{R}^+(s_0,s_1)$. We will work out the details for $s_0\neq \pm 2$ and $s_1=2$.

The f\/ibre of the surjective morphism $\mathcal{R}^+(s_0,s_1)\rightarrow \mathcal{R}(s_0,s_1)$ is in general a point and there are at most two f\/ibres isomorphic to $\mathbb{P}^1$. It follows that {\it $\mathcal{R}^+(s_0,s_1)$ is simply connected as well}.

For the formulation of the (extended) Riemann--Hilbert morphism we need to replace the space $T=\mathbb{C}^*$ of the variable $t$ by its universal covering $\tilde{T}=\mathbb{C}$. The reason is that the singular direction at inf\/inity varies with~$t$. Then
\begin{gather*} {\rm RH}^+\colon \ \mathcal{M}^+(\theta_0,\theta_1)\times _T\tilde{T}\rightarrow \mathcal{R}^+(s_0,s_1)\times \tilde{T}\end{gather*}
is a well def\/ined analytic map. This (extended) Riemann--Hilbert map ${\rm RH}^+$ is bijective on points. Indeed, the points on the left hand side correspond to the tuples $(M,\theta_0,\theta_1,t)$ with additionally $u\in \mathbb{C}$ with $t=e^{2\pi i u}$ and level structure(s) if needed. The points on the right hand side correspond to the analytic data with level structure (if needed), $u\in \mathbb{C}$ and the formal structure at $z=\infty$. By \cite[Theorem~1.7]{vdP-Sa}, these two sets coincide. As in \cite[Theorem~1.5]{vdP1} we conclude that ${\rm RH}^+$ is an analytic isomorphism between two algebraic varieties over~$\mathbb{C}$. Moreover, as in the proof of {\it loc. sit.}, from the isomorphism one obtains (compare~\cite{OKSO})

\begin{Theorem}
The Painlev\'e property for ${\rm degP}_{\rm V}(\theta_0,\theta_1)$ holds. Moreover $\mathcal{M}^+(\theta_0,\theta_1)\times_T \tilde{T}$ is the
Okamoto--Painlev\'e space.
\end{Theorem}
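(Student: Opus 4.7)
The plan is to run essentially the same argument as in \cite{vdP1}[Theorem~1.5], leaning on the analytic isomorphism
\[ {\rm RH}^+\colon \mathcal{M}^+(\theta_0,\theta_1)\times_T\tilde{T} \longrightarrow \mathcal{R}^+(s_0,s_1)\times\tilde{T} \]
established just before the theorem, together with the crucial observation that the right hand side is a \emph{trivial} family over $\tilde{T}$.

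First I would recast the Painlev\'e property geometrically. A local holomorphic solution $q(t)$ of ${\rm degP}_{\rm V}(\theta_0,\theta_1)$ near some $\tilde{t}_0\in\tilde{T}$ determines, via the Hamiltonian coordinates $(q,p)$ extracted in Section~\ref{section3}, a local holomorphic section $\sigma$ of the projection $\mathcal{M}^+(\theta_0,\theta_1)\times_T\tilde{T}\to\tilde{T}$. By the very definition of $\mathcal{R}^+(s_0,s_1)$ in terms of monodromy, formal monodromy and the single Stokes matrix, the image ${\rm RH}^+\circ\sigma$ is contained in a fibre $\{r\}\times\tilde{T}$: this is exactly the statement that isomonodromic deformation keeps the analytic data constant.

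Next I would extend globally. The constant section $\tilde{t}\mapsto (r,\tilde{t})$ is defined on all of $\tilde{T}\cong\mathbb{C}$, and applying $({\rm RH}^+)^{-1}$, which is available everywhere because ${\rm RH}^+$ is a global analytic isomorphism, produces a section $\tilde{\sigma}\colon\tilde{T}\to\mathcal{M}^+(\theta_0,\theta_1)\times_T\tilde{T}$ extending $\sigma$. Composing $\tilde{\sigma}$ with the rational function $q=-b_0$ on the affine piece $\mathcal{M}_1(\theta_0,\theta_1)$ (and the analogous coordinate on $\mathcal{M}_2(\theta_0,\theta_1)$) yields a meromorphic continuation of $q(t)$ to all of $\tilde{T}$: poles of the extension occur precisely where $\tilde{\sigma}$ crosses the pole divisor of $q$, and nothing worse than a pole can occur because $q$ is algebraic on the smooth variety $\mathcal{M}^+(\theta_0,\theta_1)$. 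This is the Painlev\'e property.

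For the Okamoto--Painlev\'e identification, the relative surface $\mathcal{M}^+(\theta_0,\theta_1)\times_T\tilde{T}\to\tilde{T}$ is smooth by Section~\ref{section2.3.3}, its fibres compactify the space of initial values of the Painlev\'e foliation, and by the previous step every local solution extends to a global section. These are the characterising properties of the Okamoto--Painlev\'e space in the sense of \cite{OKSO,STT02}, so the two must coincide. The step I expect to require most care is checking that the local correspondence between solutions of the nonlinear equation and sections of $\mathcal{M}^+\times_T\tilde{T}$ is genuinely compatible with ${\rm RH}^+$ along the exceptional lines produced by the resolution $\mathcal{M}^+\to\mathcal{M}$ in the singular cases $\theta_i\in\mathbb{Z}$; this compatibility, together with analytic smoothness there, is exactly what is packaged into the word ``isomorphism'' (as opposed to ``bijection on points'') in the statement about ${\rm RH}^+$, so the essential work was already done prior to the theorem.
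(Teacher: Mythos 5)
Your proposal is correct and follows essentially the same route as the paper: the paper's own proof consists of citing \cite[Theorem~1.5]{vdP1} and \cite{OKSO} for exactly the argument you spell out, namely that isomonodromic local solutions map under ${\rm RH}^+$ to constant sections of the trivial family $\mathcal{R}^+(s_0,s_1)\times\tilde{T}$, which extend over all of $\tilde{T}=\mathbb{C}$ and pull back via $({\rm RH}^+)^{-1}$ to global sections whose composition with the rational coordinate $q=-b_0$ is meromorphic. Your elaboration of the details (and the caveat about the exceptional fibres of $\mathcal{M}^+\rightarrow\mathcal{M}$) is consistent with what the cited references carry out.
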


{\it The resolution $\mathcal{R}^+(s_0,2)\rightarrow \mathcal{R}(s_0,2)$ for $s_0\neq \pm 2$}. As before, any dif\/ferential module $M$ (with the given data) determines a basis $e_1$, $e_2$ of the solution space $V$ at $z=\infty$ such that the formal monodromy $\gamma$ has matrix $\left(\begin{smallmatrix} 0& -1\\ 1& 0\end{smallmatrix}\right)$. This basis is unique up to multiplication of $e_1$, $e_2$ by the same constant. All maps are written as matrices with respect to this basis.

A point of $\mathcal{R}^+(s_0,2)$ corresponds to a tuple $(M_0,M_1,M_\infty,\mathbb{C}v)$ where $M_0$, $M_1$, $M_\infty$ are the matrices for the topological monodromies for the points $0$, $1$, $\infty$. Then $M_0M_1M_\infty =1$ and $v$ is a non zero eigenvector of~$M_1$ (for the
eigenvalue~1). Write $M_1=\left(\begin{smallmatrix}a_1& b_1\\ c_1& d_1 \end{smallmatrix}\right)$, $M_\infty=\left(\begin{smallmatrix} -e& -1\\ 1& 0 \end{smallmatrix}\right)$, $v=\left(\begin{smallmatrix} y_0\\ y_1\end{smallmatrix}\right)$. There are two af\/f\/ine parts given by the cases $y_0\neq 0$ and $y_1\neq 0$. We consider here the f\/irst case and normalize $y_0=1$.

The matrix $M_0$ is determined by $M_0M_1M_\infty=1$. The equations in the variables $a_1$, $b_1$, $c_1$, $d_1$, $e$, $y_1$ are
\begin{gather*} a_1+d_1=2,\qquad a_1d_1-b_1c_1=1,\qquad a_1-1+b_1y_1=0, \qquad c_1+(d_1-1)y_1=0, \end{gather*}
where the last two equations come from: $v$ is eigenvector for $M_1$. One eliminates $c_1$, $d_1$ by $a_1+d_1=2$ and $-a_1e+b_1-c_1=s_0$. One writes $x_1=-b_1$, $x_2=-a_1$, $x_3=-e$. Thus the ring of regular functions on the af\/f\/ine part that we are looking at has the form $\mathbb{C}[x_1,x_2,x_3,y_1]/{\rm relations}$ and the relations are
\begin{gather*}
x_1x_2x_3+x_1^2+x_2^2+s_0x_1+2x_2+1=0, \qquad (x_2+1)+x_1y_1=0,\\
 (1+x_2)y_1=x_2x_3+x_1+s_0.
 \end{gather*}
The morphism $\mathcal{R}^+(s_0,2)\rightarrow \mathcal{R}(s_0,2)$, restricted to this af\/f\/ine part, is given by the obvious homomorphism
\begin{gather*}\mathbb{C}[x_1,x_2,x_3]/\big( x_1x_2x_3+x_1^2+x_2^2+s_0x_1+2x_2+1\big)
\rightarrow \mathbb{C}[x_1,x_2,x_3,y_1]/{\rm relations}.\end{gather*}
For $x_1\neq 0$ one can eliminate $y_1$ by using the equation $(x_2+1)+x_1y_1=0$. In fact, the above map is an isomorphism after inverting the element~$x_1$. For $x_1=0$ one f\/inds that $(x_1,x_2,x_3)=(0,-1,s_0)$ and this is the unique singular point of $\mathcal{R}(s_0,2)$. The points lying above this singular point are $(x_1,x_2,x_3,y_1)=(0,-1,s_0,a)$ for all $a\in \mathbb{C}$. One easily verif\/ies that these points of $\mathcal{R}^+(s_0,2)$ are smooth.

The computation of the af\/f\/ine part $y_1\neq 0$ is similar. One concludes that $\mathcal{R}^+(s_0,2)\rightarrow \mathcal{R}(s_0,2)$ is a resolution of singularities and that the f\/ibre above the singular point is $\mathbb{P}^1$.

\section{Computing the B\"acklund transformations}

\looseness=1 Write $\bf S$ for the union of the sets ${\bf S}(\theta_0,\theta_1)$ taken over all $\theta_0$, $\theta_1$. We recall that $\theta_0$, $\theta_1$, $t$ determine the invariant lattices at $z=0,1,\infty$. A `natural' automorphism of~$\bf S$ may change a~given tuple $(M,\theta_0,\theta_1,t)$ into the same module $M$ but with dif\/ferent lattices. For example, the lattice at $z=0$ will be changed by replacing $\theta_0$ by $\theta_0+2$. Further the module~$M$ can be changed and one can consider the automorphism of $\mathbb{P}^1$ which interchanges $z=0,1$ and has $z=\infty$ as f\/ixed point.

 Some `natural' automorphisms of $\bf S$ are given by the tuple $(M,\theta_0,\theta_1,t) \mapsto $
 \begin{enumerate}\itemsep=0pt
\item[(1)] $(M,\theta_0,\theta_1,-t)$,
\item[(2)] $(M,-\theta_0,\theta_1,t)$,
\item[(3)] $(M,\theta_0,-\theta_1,t)$,
\item[(4)] $(\tilde{M},\theta_1,\theta_0)$ where $\tilde{M}$ is obtained from $M$ by the automorphism $z\mapsto 1-z$,
\item[(5)] $(N\otimes M, \theta _0+1,\theta_1,t)$ where $N$ is the 1-dimensional module represented by $\frac{d}{dz}+\frac{1}{2z}$.
\end{enumerate}
These special automorphisms will be lifted to isomorphisms between various moduli spaces $\mathcal{M}^+(\theta_0,\theta_1)$. The isomorphisms preserve the analytic data and map therefore solutions of one ${\rm degP}_{\rm V}$ equation to solutions of another. They are B\"acklund transformations for the ${\rm degP}_{\rm V}$ and we hope that we found all of them in this way.

{\it Discussion of the transformations}: (1) only changes the variable. It induces the B\"acklund transformation which sends a solution $q(t)$ of
 ${\rm degP}_{\rm V}(\theta_0,\theta_1)$ to another solution $q(-t)$ of that equation. (2)~and~(3) induce the identity on the solutions of all ${\rm degP}_{\rm V}$.

(4) Consider a point $M$ of $\mathcal{M}(\theta_0,\theta_1)$ belonging to the f\/irst af\/f\/ine chart of this variety.
It is represented by $\frac{d}{dz}+\frac{1}{z(z-1)}\left(\begin{smallmatrix} a_0&c_0+c_1z+t^2z^2\\ z-q& -a_0\end{smallmatrix}\right)$.
The change $z\mapsto 1-z$ applied to this operator yields
 \begin{gather*} \frac{d}{dz}+\frac{1}{z(z-1)}\left(\begin{matrix} -a_0& -c_0-c_1(1-z)-t^2(1-z)^2\\ z+q-1& a_0\end{matrix}\right).
 \end{gather*}
A small computation shows that after changing $t$ into $it$ this becomes a dif\/ferential operator belonging to $\mathcal{M}(\theta_1,\theta_0)$. This produces the B\"acklund transformation $q(t)\mapsto -q(it)+1$ which sends a solution of ${\rm degP}_{\rm V}(\theta_0,\theta_1)$ to a~solution of ${\rm degP}_{\rm V}(\theta_1,\theta_0)$.

(5) $M$ is locally represented by $z\frac{d}{dz}+\left(\begin{smallmatrix}\omega _0& 0\\ 0 & -\omega_0 \end{smallmatrix}\right)$ with $\omega _0 =\frac{\theta_0}{2}$ and by $z\frac{d}{dz}+\left(\begin{smallmatrix} \frac{-3}{4}& t^2z\\ 1& -\frac{1}{4} \end{smallmatrix}\right)$. After taking the tensor product with $N$ these operators become $z\frac{d}{dz}+\left(\begin{smallmatrix} \omega _0+\frac{1}{2}& 0\\ 0& -\omega_0+\frac{1}{2}\end{smallmatrix}\right)$ and $z\frac{d}{dz}+\left(\begin{smallmatrix} \frac{-3}{4}+\frac{1}{2}& t^2z\\ 1& -\frac{1}{4}+\frac{1}{2}\end{smallmatrix}\right)$. By multiplying one basis vector by $z^{-1}$, the f\/irst operator becomes $z\frac{d}{dz}+\left(\begin{smallmatrix} \omega _0+\frac{1}{2}& 0\\ 0& -\omega_0-\frac{1}{2}\end{smallmatrix}\right)$ and the second becomes $z\frac{d}{dz}+\left(\begin{smallmatrix} \frac{-3}{4}-\frac{1}{2}& t^2z\\ 1& -\frac{1}{4}+\frac{1}{2}\end{smallmatrix}\right)$. This shows the validity of~(5).

Using MAPLE one can compute the actual isomorphism between the two moduli spaces. In terms of matrix dif\/ferential operators, this works as follows. Let the dif\/ferential operator $\frac{d}{dz}+A(a,q)$ represent an open part of the f\/irst af\/f\/ine chart $\mathcal{M}_1(\theta_0,\theta_1)$. Here $q$ stands for $-b_0$ (as before) and~$a$ denotes~$a_0$. Further $c_1$ is written as a~rational function in~$a$ and~$q$.

Similarly, let $\frac{d}{dz}+\tilde{A}(\tilde{a},\tilde{q})$ denote the dif\/ferential operator on an open part of $\mathcal{M}(\theta_0+1,\theta_1)$. Then there exists $U\in {\rm GL}_2(\mathbb{C}(z))$ such that
\begin{gather*}
U\left(\frac{d}{dz}+A(a,q)+\left(\begin{matrix} \frac{1}{2z}& 0 \\ 0 & \frac{1}{2z} \end{matrix}\right) \right)U^{-1}=\frac{d}{dz}+\tilde{A}(\tilde{a},\tilde{q}).
\end{gather*}
Further $U$, $U^{-1}$ are seen to have poles of order $\leq 1$ at $z=0$ and $z=\infty$ and no further poles. This information suf\/f\/ices for the computation of the solution~$\tilde{q}$ of ${\rm degP}_{\rm V}(\theta_0+1,\theta_1)$ in terms of the solution~$q$ of ${\rm degP}_{\rm V}(\theta_0,\theta_1)$ and its f\/irst derivative (and $t$, $\theta_0$, $\theta_1$)
\begin{gather*}
\tilde{q}=1-\frac{\theta_0^2(q-1)}{4q^2t^2}+\frac{a\theta_0}{q^2t^2}+\frac{\theta_1^2}{4t^2(q-1)}
\end{gather*}
and a longer formula for $\tilde{a}$, namely
\begin{gather*} \frac{q-1}{8q^3t^2}\theta_0^3+\frac{q^2+2aq-q-6a}{8q^3t^2} \theta_0^2+\frac{q-1}{2q}\theta_0-\frac{a(q^2+2aq-q-3a)}{2(q-1)q^3t^2}\theta_0
-\frac{\theta_0\theta_1^2}{8qt^2(q-1)}\\
\qquad{} -\frac{a}{q} - \frac{q+2a}{8q(q-1)t^2}\theta_1^2 +\frac{a^2(2a+q)}{q^3t^2(q-1)}.
\end{gather*}

{\it Now we compare the group of the B\"acklund transformations for ${\rm degP}_{\rm V}$ with the work of N.S.~Witte} \cite{Wi}. We restrict our transformations to the case of even solutions of ${\rm degP}_{\rm V}$ and f\/ind the classical ${\rm P}_{\rm V}\big(\frac{\theta_1^2}{2}, -\frac{\theta_0^2}{2},-2,0\big)$. The formulas (21), (22), (23) of~\cite{Wi} lead to $v_1=\theta_0+\theta_1$ and $v_2=\theta_0-\theta_1$. Our group of transformations for the even solutions of ${\rm degP}_{\rm V}$ is generated by $(\theta_0,\theta_1)\mapsto (\theta_0+1,\theta_1)$ and $(\theta_0,\theta_1)\mapsto (\theta_1,\theta_0)$ and the `trivial' transformations $(\theta_0,\theta_1)\rightarrow (\pm \theta_0,\pm \theta_1)$. One easily verif\/ies that this coincides with the group of transformations in~\cite{Wi}.

We remark that the transformations $\theta_0\mapsto -\theta_0$ and/or $\theta_1\mapsto -\theta_1$ act trivially on solutions of ${\rm degP}_{\rm V}$. However (compare~\cite{Wi}), they do not act as the identity on a~suitable Hamiltonian system for ${\rm degP}_{\rm V}$.

\subsection*{Acknowledgments}
The authors thank Yousuke Ohyama for his many helpful answers to our questions, and the referees for their careful reading and useful suggestions. The f\/irst author thanks the Johann Bernoulli Institute of the University of Groningen and the Universidad Sim\'{o}n Bol\'{\i}var for f\/inancial support to participate in this project.

\pdfbookmark[1]{References}{ref}
\LastPageEnding

\end{document}